\documentclass[1p,fleqn,preprint]{elsarticle}

\usepackage{amsmath,amssymb,amsfonts,amsthm}
\usepackage[mathscr]{eucal}
\usepackage{ifpdf}
\ifpdf
\usepackage[bookmarks=false,%
pdfstartview=FitH,linkbordercolor={0.5 1 1},%
citebordercolor={0.5 1 0.5},unicode,%
hyperindex]{hyperref}%
\else
\usepackage{breakurl}                          
\fi

\biboptions{numbers,sort&compress}

\newtheorem{theorem}{Theorem}
\newtheorem{lemma}{Lemma}
\newtheorem{example}{Example}
\newtheorem{corollary}{Corollary}
\newtheorem{remark}{Remark}
\newtheorem*{daitheorem}{Dai's Theorem}

\newcommand{\setA}{\mathscr{A}}
\newcommand{\setM}{\mathscr{M}}
\newcommand{\setP}{\mathscr{P}}

\let\emptyset\varnothing

\journal{Linear Algebra and Its Applications}

\begin{document}

\begin{frontmatter}

\title{Matrix products with constraints on the sliding block relative frequencies
of different factors\tnoteref{rfbr}}

\tnotetext[rfbr]{Supported by the Russian Foundation for Basic Research,
Project No. 13-01-13105.}

\author{Victor Kozyakin}

\address{Institute for Information Transmission
Problems\\ Russian Academy of Sciences\\ Bolshoj Karetny lane 19, Moscow
127994 GSP-4, Russia}

\ead{kozyakin@iitp.ru}
\ead[url]{http://www.iitp.ru/en/users/46.htm}

\begin{abstract}
One of fundamental results of the theory of joint/generalized spectral
radius, the Berger--Wang theorem, establishes equality between the joint
and generalized spectral radii of a set of matrices. Generalization of this
theorem on products of matrices whose factors are applied not arbitrarily
but are subjected to some constraints is connected with essential
difficulties since known proofs of the Berger--Wang theorem rely on the
arbitrariness of appearance of different matrices in the related matrix
products. Recently, X.~Dai~\cite{Dai:JFI14} proved an analog of the
Berger--Wang theorem for the case when factors in matrix products are
formed by some Markov law.

We extend the concepts of joint and generalized spectral radii to products
of matrices with constraints on the sliding block relative frequencies of
occurrences of different factors, and prove an analog of the Berger--Wang
theorem for this case.
\end{abstract}

\begin{keyword}
Infinite matrix products\sep Joint spectral radius\sep Generalized spectral
radius\sep Berger--Wang formula\sep Topological Markov chains\sep Symbolic
sequences

\PACS 02.10.Ud \sep 02.10.Yn

\MSC[2010] 15A18\sep 15A60\sep 37B10\sep 60J10
\end{keyword}

\end{frontmatter}

\section{Introduction}\label{S-intro}

In various theoretic and applied problems there arise the matrix products
\begin{equation}\label{E-switchsys}
M_{\alpha_{0}}M_{\alpha_{1}}\cdots M_{\alpha_{n}},\quad n\ge0,
\end{equation}
where $M_{i}$ are ($d\times d$)-matrices from a finite collection
$\setM=\{M_{1},M_{2},\ldots,M_{r}\}$ with the elements from the field
$\mathbb{K}=\mathbb{R},\mathbb{C}$ of real or complex numbers, and
$\boldsymbol{\alpha}=(\alpha_{n})$ is a sequence of symbols from the set
$\setA=\{1,2,\ldots,r\}$, see,
e.g.,~\cite{Dai:JFI14,Prot:CDC05-2,SWMWK:SIAMREV07,JPB:LAA08,GugZen:LNM14,Koz:IITP13}
and the bibliography therein.

The question about the rate of growth of the matrix products
\eqref{E-switchsys} is relatively simple (at least theoretically) in edge
cases, for example, when the sequence $\boldsymbol{\alpha}=(\alpha_{n})$ is
periodic or in \eqref{E-switchsys} all possible sequences
$\boldsymbol{\alpha}=(\alpha_{n})$ with symbols from $\setA=\{1,2,\ldots,r\}$
are considered. In the latter case the question about the rate of growth of
all possible matrix products \eqref{E-switchsys} can be answered in terms of
the so-called joint or generalized spectral radii of the set of matrices
$\setM$ \cite{BerWang:LAA92,DaubLag:LAA92,DaubLag:LAA01,RotaStr:IM60}.

In intermediate situations, when the sequences
$\boldsymbol{\alpha}=(\alpha_{n})$ in \eqref{E-switchsys} are relatively
complex but still not totally arbitrary, the question about the rate of
growth of the matrix products \eqref{E-switchsys} becomes highly nontrivial
and its resolution essentially depends on the structure of the index
sequences $\boldsymbol{\alpha}=(\alpha_{n})$. In particular, one of the key
features of the index sequence in \eqref{E-switchsys} is the frequency
$p_{i}$ of occurrences of the index $i$.

As a rule, the frequency $p_{i}$ is defined as the limit of the relative
frequencies $p_{i,n}$ of occurrences of the symbol $i$ among the first $n$
members of a sequence. However, one should bear in mind that such a
definition of frequency is rather subtle and not very constructive from the
point of view of mathematical formalism. Already in the situation when one
deals with a single sequence $\boldsymbol{\alpha}=(\alpha_{n})$, this
definition is not enough informative since it does not answer the question of
how often different symbols appear in intermediate, not tending to infinity,
finite segments of a sequence. This definition becomes all the less
satisfactory in situations when one should deal with not a single sequence
but with an infinite collection of such sequences. The principal deficiency
here is that the definition of frequency given above does not withstand
transition to the limit with respect to different sequences which results in
substantial theoretical and conceptual difficulties.

To give ``good'' properties (from the point of view of ability to use
mathematical methods) to determination of frequency one often needs either to
require some kind of uniformity of convergence of the relative frequencies
$p_{i,n}$ to $p_{i}$ or to treat appearance of the related symbols in a
sequence as a realization of events generated by some random or deterministic
ergodic system, and so on. In the latter case, for the analysis of behavior
of the matrix products \eqref{E-switchsys}, the so-called multiplicative
ergodic theorem (in a probabilistic or a measure theoretic setting) is  most
often used, see, e.g., \cite{DHX:SIAMJCO08,DHX:AUT11}. However, under such an
approach one needs to impose rather strong restrictions on the laws of
forming the index sequences $\boldsymbol{\alpha}=(\alpha_{n})$ which are
often difficult to verify of confirm in applications. In all such cases the
arising families of the index sequences and of the related matrix products
can be rather attractive from the purely mathematical point of view but their
description becomes less and less constructive. In applications, it often
leads to emergence of an essential conceptual gap or of some kind strained
interpretation at use of the related objects and constructions.

At the present, there exists an extensive literature devoted to studying the
frequency properties of various classes of symbolical sequences, see, e.g.,
\cite{MacKay03,MOS:GTS99,MOS:IEEETIT01,Kitchens98,Immink04,LindMarcus95} and
the references therein. The problem on constructive determination of classes
of symbolic sequences with prescribed frequency properties and of the related
matrix products has got lesser attention. Only recently, in
\cite{Dai:JFI14,Koz:LAA14} the area of applicability of the methods of
joint/generalized spectral radius was expanded on the matrix products
\eqref{E-switchsys} in which the index sequences
$\boldsymbol{\alpha}=(\alpha_{n})$ are described by the topological Markov
chains. In connection with this, the aim of the article is to describe a
class of $r$-symbolic sequences for which it is possible to constructively
define ``approximate relative frequencies'' of symbols in the \emph{sliding
$\ell$-blocks}, that is, in each block of $\ell$ consecutive symbols. Such a
description will be given in terms of the $\ell$-step topological Markov
chains (subshifts of finite type) which allows to use the approach developed
in \cite{Dai:JFI14,Koz:LAA14} to define the joint/generalized spectral radius
for the products of matrices whose factors are applied not arbitrarily but
are subjected to some frequency constraints.

Outline the structure of the work. In Section~\ref{S-Symbseq}, we recall
basic definitions related to symbolic sequences. Then we introduce the
concept of symbolic sequences with constraints on the sliding $\ell$-block
relative frequencies, and study their principal properties. In
Theorem~\ref{T-Markov} we show that the left shift on the set of symbolic
sequences with constraints on the sliding $\ell$-block relative frequencies
is an $\ell$-step topological Markov chain. In spite of its obviousness,
Theorem~\ref{T-Markov} is of principal importance in further constructions.
Properties of symbolic sequences with constraints on the sliding block
relative frequencies obtained in Section~\ref{S-Symbseq} are used in
Section~\ref{S-MatProd} to analyze the rate of growth of norms of matrix
products. For this purpose, we recall the basics of the theory of
joint/generalized spectral radius and the results from
\cite{Dai:JFI14,Koz:LAA14} related to the generalization of the Berger--Wang
theorem on the case of Markovian products of matrices. In the final part of
Section~\ref{S-MatProd}, we define the concepts of joint and generalized
spectral radii for matrix products with constraints on the sliding
$\ell$-block relative frequencies, and deduce with the help of
Theorem~\ref{T-Markov} from the results of~\cite{Dai:JFI14,Koz:LAA14} an
analog of the Berger--Wang theorem for this case, Theorem~\ref{T-BWgen}.

\section{Symbolic sequences}\label{S-Symbseq}

Let us recall basic definitions related to symbolic sequences, see, e.g.,
\cite{LindMarcus95}. Consider infinite sequences
$\boldsymbol{\alpha}=(\alpha_{n})$ defined for
$n\in\mathbb{N}:=\{0,1,2,\ldots\}$\footnote{In symbolic dynamics, see, e.g.,
\cite{KatokHas:e,Kitchens98,LindMarcus95}, sequences defined for all integer
values of $i$, that is, for $i\in\mathbb{Z}$, are also often considered. In
this work sequences of such a type will not be needed.} and taking values
from the alphabet (set of symbols) $\setA=\{1,2,\ldots,r\}$, and denote by
$\setA^{\mathbb{N}}$ the totality of all such sequences. By $\setA_{l}$ we
denote the set of all finite sequences $\boldsymbol{\alpha}=(\alpha_{n})$ of
length $l$, and by $\setA^{*}=\cup_{l\ge1}\setA_{l}$ we denote the set of all
finite sequences (of all possible lengths) taking values in $\setA$. The
number of elements of a sequence $\boldsymbol{\alpha}\in\setA^{*}$, called
the \emph{length} of $\boldsymbol{\alpha}$, is denoted by
$|\boldsymbol{\alpha}|$, and $|\boldsymbol{\alpha}|_{i}$ stands for the
number of occurrences of the symbol $i$ in $\boldsymbol{\alpha}$. The
quantity $\frac{|\boldsymbol{\alpha}|_{i}}{|\boldsymbol{\alpha}|}$ is then
the \emph{relative frequency} of occurrences of the symbol $i$ in
$\boldsymbol{\alpha}$.

\subsection{Sequences with constraints on the sliding block relative
frequencies of symbols}\label{S-ConstrainedSeq}

Let $p=(p_{i},p_{2},\ldots,p_{r})$ be a set of positive numbers satisfying
$\sum_{i=1}^{r}p_{i}=1$, and let
\begin{equation}\label{E-fbouns}
p^{-}=(p^{-}_{1},p^{-}_{2},\ldots,p^{-}_{r}),\quad
p^{+}=(p^{+}_{1},p^{+}_{2},\ldots,p^{+}_{r}),
\end{equation}
be sets of lower and upper bounds for $p$:
\begin{equation}\label{E-freq-cond}
0\le p^{-}_{i}<p_{i}<p^{+}_{i}\le 1,\quad i=1,2,\ldots,r.
\end{equation}

Given a natural number $\ell$, denote by $\setA_{\ell}(p^{\pm})$ the set of
all finite sequences $\boldsymbol{\alpha}\in\setA_{\ell}$ for which the
relative frequencies of occurrences of different symbols satisfy
\begin{equation}\label{E-quotas}
p^{-}_{i}\le\frac{|\boldsymbol{\alpha}|_{i}}{|\boldsymbol{\alpha}|}
\le p^{+}_{i},\quad i=1,2,\ldots,r.
\end{equation}
By rewriting these last inequalities in the form
\[
p^{-}_{i}|\boldsymbol{\alpha}|\le|\boldsymbol{\alpha}|_{i}
\le p^{+}_{i}|\boldsymbol{\alpha}|,\quad i=1,2,\ldots,r,
\]
we can interpret them as constraints on the number of occurrences of
different symbols in the sequences $\boldsymbol{\alpha}\in\setA_{\ell}$. By
$\setA^{\mathbb{N}}_{\ell}(p^{\pm})$ we will denote the set of all sequences
from $\setA^{\mathbb{N}}$ each finite subsequence $\boldsymbol{\alpha}$ of
which of length $\ell$ satisfies the constraints \eqref{E-quotas}.

\begin{example}\label{Ex1}\rm Let $r=3, \ell=10$ and
$p= (0.23, 0.33, 0.44)$. Define the sets \eqref{E-fbouns} of lower and upper
bounds for $p$ by setting $p^{-}_{i}=p_{i}-0.1$ and $p^{+}_{i}=p_{i}+0.1$ for
$i=1,2,3$, that is,
\[
p^{-}=(0.13, 0.23, 0.34),\quad
p^{+}=(0.33, 0.43, 0.54).
\]
Then the set $\setA^{\mathbb{N}}_{\ell}(p^{\pm})$ contains the following
sequences:
\begin{align*}
\boldsymbol{\alpha}_{1}&=(2,1,2,3,3,3,2,3,3,1,2,1,2,3,1,3,2,3,3,3,2,1,2,2,1,3,3,1,3,3,2,\ldots),\\
\boldsymbol{\alpha}_{2}&=(3,2,2,1,3,3,3,3,2,1,2,1,2,3,3,2,3,3,2,1,2,1,3,1,3,2,3,3,2,2,1,\ldots),\\
\boldsymbol{\alpha}_{3}&=(1,1,3,3,2,2,1,2,3,3,1,3,1,3,2,2,3,2,2,3,1,3,1,3,2,1,3,2,2,3,1,\ldots).
\end{align*}
\end{example}

\begin{remark}\label{Rem1}\rm
The set $\setA_{\ell}(p^{\pm})$ may be empty even in the case when
inequalities \eqref{E-freq-cond} are satisfied. However if
$\setA_{\ell}(p^{\pm})\neq\emptyset$ then
$\setA^{\mathbb{N}}_{\ell}(p^{\pm})\neq\emptyset$, too. To show this it
suffices to take an arbitrary sequence
$\boldsymbol{\alpha}=(\alpha_{0},\alpha_{1},\ldots,\alpha_{\ell-1})\in
\setA_{\ell}(p^{\pm})$ and to observe that its periodic extension to the
right (with period $\ell$) belongs to $\setA^{\mathbb{N}}_{\ell}(p^{\pm})$.
\end{remark}

\begin{remark}\label{Rem2}\rm
In general, the frequencies of occurrences of the symbols $i=1,2,\ldots,r$ in
the sequences from $\setA^{\mathbb{N}}_{\ell}(p^{\pm})$ are not well defined.
More precisely it means the following. Denote by
$\boldsymbol{\alpha}_{n}=(\alpha_{0},\alpha_{1},\ldots,\alpha_{n})$ the
initial interval of length $n$ for an arbitrary sequence
$\boldsymbol{\alpha}=(\alpha_{0},\alpha_{1},\ldots)\in
\setA^{\mathbb{N}}_{\ell}(p^{\pm})$. For each $i=1,2,\ldots,r$, the quantity
$\frac{|\boldsymbol{\alpha}_{n}|_{i}}{|\boldsymbol{\alpha}_{n}|}$ is the
relative frequency of occurrences of the symbol $i$ among the first $n$
members of $\boldsymbol{\alpha}$. Then the relative frequencies
$\frac{|\boldsymbol{\alpha}_{n}|_{i}}{|\boldsymbol{\alpha}_{n}|}$ are
``close'' to the corresponding quantities $p_{i}$ but, in general, they may
have no limits as $n\to\infty$.
\end{remark}

The next lemma answers the question when the set $\setA_{\ell}(p^{\pm})$ is
nonempty. Recall that, for a real number $x$, the floor value $\lfloor
x\rfloor$ is the largest integer not greater than $x$ and the ceiling value
$\lceil x \rceil$ is the smallest integer not less than $x$.

\begin{lemma}\label{L-SigmaM-nonempty}
$\setA_{\ell}(p^{\pm})\neq\emptyset$ if and only if
\begin{gather}\label{E-SigmaM-nonempty1}
\lceil p^{-}_{i}\ell\rceil \le \lfloor p^{+}_{i}\ell\rfloor, \quad i=1,2,\ldots,r,\\
\label{E-SigmaM-nonempty2} \sum_{i=1}^{r}\lceil p^{-}_{i}\ell\rceil \le \ell
\le \sum_{i=1}^{r}\lfloor p^{+}_{i}\ell\rfloor.
\end{gather}
\end{lemma}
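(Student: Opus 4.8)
The plan is to translate the frequency constraints into a purely combinatorial condition on the occurrence counts. A sequence $\boldsymbol{\alpha}\in\setA_{\ell}$ is determined, up to the irrelevant ordering of its entries, by the vector of counts $(n_{1},\ldots,n_{r})$ with $n_{i}=|\boldsymbol{\alpha}|_{i}$; conversely, any vector of nonnegative integers with $\sum_{i}n_{i}=\ell$ is realized by some sequence of length $\ell$ (simply list $n_{1}$ copies of $1$, then $n_{2}$ copies of $2$, and so on). Hence $\setA_{\ell}(p^{\pm})\neq\emptyset$ if and only if there exist nonnegative integers $n_{1},\ldots,n_{r}$ with $\sum_{i=1}^{r}n_{i}=\ell$ and $p^{-}_{i}\ell\le n_{i}\le p^{+}_{i}\ell$ for every $i$.

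Next I would eliminate the real bounds in favour of integer ones. Since each $n_{i}$ is an integer, the inequality $p^{-}_{i}\ell\le n_{i}$ is equivalent to $\lceil p^{-}_{i}\ell\rceil\le n_{i}$, and likewise $n_{i}\le p^{+}_{i}\ell$ is equivalent to $n_{i}\le\lfloor p^{+}_{i}\ell\rfloor$. Writing $a_{i}:=\lceil p^{-}_{i}\ell\rceil$ and $b_{i}:=\lfloor p^{+}_{i}\ell\rfloor$, the question becomes whether the integer box $\prod_{i=1}^{r}\{n:a_{i}\le n\le b_{i}\}$ meets the hyperplane $\sum_{i}n_{i}=\ell$. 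A necessary condition for each coordinate interval to be nonempty is $a_{i}\le b_{i}$, which is precisely \eqref{E-SigmaM-nonempty1}; and summing the coordinatewise constraints over a putative solution gives $\sum_{i}a_{i}\le\ell\le\sum_{i}b_{i}$, which is \eqref{E-SigmaM-nonempty2}. This settles the ``only if'' direction.

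For sufficiency, the key observation is that when every coordinate interval is nonempty the set of attainable sums $\bigl\{\sum_{i}n_{i}:a_{i}\le n_{i}\le b_{i}\bigr\}$ is exactly the full integer interval from $\sum_{i}a_{i}$ to $\sum_{i}b_{i}$. I would prove this by a one-step incrementation argument: start from $n_{i}=a_{i}$ for all $i$, whose total is $\sum_{i}a_{i}$, and repeatedly raise by $1$ a single coordinate that has not yet reached its upper bound $b_{i}$. Each such step increases the total by exactly $1$ and keeps the vector inside the box, and the process terminates at $n_{i}=b_{i}$ for all $i$, with total $\sum_{i}b_{i}$. Consequently every integer between $\sum_{i}a_{i}$ and $\sum_{i}b_{i}$, in particular $\ell$ by \eqref{E-SigmaM-nonempty2}, is attained as some admissible total, yielding the desired counts and hence a sequence in $\setA_{\ell}(p^{\pm})$.

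I expect no genuine obstacle here, as both directions are elementary. The only point requiring a little care is the ``interval of attainable sums'' claim in the sufficiency part: one must check that the greedy single-coordinate incrementation never skips an integer and never exits the box, which the unit-step construction guarantees precisely because the increments are of size one and a coordinate is raised only while it remains strictly below its bound.
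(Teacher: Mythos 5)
Your proof is correct and follows essentially the same route as the paper: reduce nonemptiness of $\setA_{\ell}(p^{\pm})$ to the existence of an integer count vector in the box $\lceil p^{-}_{i}\ell\rceil\le n_{i}\le\lfloor p^{+}_{i}\ell\rfloor$ summing to $\ell$. In fact your write-up is more complete than the paper's, which merely asserts the equivalence in words; your unit-incrementation argument showing that the attainable sums form a full integer interval is exactly the detail the published proof leaves implicit.
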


\begin{proof}
Condition \eqref{E-SigmaM-nonempty1} means that, for each $i=1,2,\ldots,r$,
there must exist at least one sequence $\boldsymbol{\alpha}$ of length $\ell$
the number $|\boldsymbol{\alpha}|_{i}$ of occurrences of the symbol $i$ in
which satisfies the ``frequency constraints'' \eqref{E-quotas}. Whereas
condition \eqref{E-SigmaM-nonempty2}, provided that condition
\eqref{E-SigmaM-nonempty1} is satisfied, is equivalent to the existence of at
least one sequence $\boldsymbol{\alpha}$ of length $\ell$ the number
$|\boldsymbol{\alpha}|_{i}$ of occurrences of each symbol $i=1,2,\ldots,r$ in
which satisfies \eqref{E-quotas}.
\end{proof}

\begin{example}\label{Ex2}\rm
Let $r=3, \ell=10$, and the sets \eqref{E-fbouns} of lower and upper bounds
for $p$ be the same as in Example~\ref{Ex2}. Then
\[
\lceil p^{-}_{1}\ell\rceil =2,\quad \lceil p^{-}_{2}\ell\rceil =3,\quad\lceil p^{-}_{3}\ell\rceil =4,\quad
\lfloor p^{+}_{1}\ell\rfloor =3,\quad \lfloor p^{+}_{2}\ell\rfloor =4,\quad\lfloor p^{+}_{3}\ell\rfloor =5,
\]
and $\sum_{i=1}^{r}\lceil p^{-}_{i}\ell\rceil =9 \le \ell \le
12=\sum_{i=1}^{r}\lfloor p^{+}_{i}\ell\rfloor$. So, both conditions
\eqref{E-SigmaM-nonempty1} and \eqref{E-SigmaM-nonempty2} hold.

Let again $p= (0.23, 0.33, 0.44)$, but the sets of lower and upper bounds for
$p$ this time be defined by the equalities $p^{-}_{i}=p_{i}-0.01$ and
$p^{+}_{i}=p_{i}+0.01$ for $i=1,2,3$, that is,
\[
p^{-}=(0.22, 0.32, 0.43),\quad
p^{+}=(0.24, 0.34, 0.45).
\]
In this case
\[
\lceil p^{-}_{1}\ell\rceil =3,\quad \lceil p^{-}_{2}\ell\rceil =4,\quad\lceil p^{-}_{3}\ell\rceil =5,\quad
\lfloor p^{+}_{1}\ell\rfloor =2,\quad \lfloor p^{+}_{2}\ell\rfloor =3,\quad\lfloor p^{+}_{3}\ell\rfloor =4,
\]
and conditions \eqref{E-SigmaM-nonempty1} are not valid for any $i=1,2,3$.

At last, let again $p= (0.23, 0.33, 0.44)$, but the sets of lower and upper
bounds for $p$ this time be defined as $p^{-}_{i}=p_{i}-0.05$ and
$p^{+}_{i}=p_{i}+0.05$ for $i=1,2,3$, that is,
\[
p^{-}=(0.18, 0.28, 0.39),\quad
p^{+}=(0.28, 0.38, 0.49).
\]
Then
\[
\lceil p^{-}_{1}\ell\rceil =2,\quad \lceil p^{-}_{2}\ell\rceil =3,\quad\lceil p^{-}_{3}\ell\rceil =4,\quad
\lfloor p^{+}_{1}\ell\rfloor =2,\quad \lfloor p^{+}_{2}\ell\rfloor =3,\quad\lfloor p^{+}_{3}\ell\rfloor =4,
\]
and conditions \eqref{E-SigmaM-nonempty1} hold for each $i=1,2,3$ whereas
condition \eqref{E-SigmaM-nonempty2} is not valid because
$\sum_{i=1}^{r}\lfloor p^{+}_{i}\ell\rfloor =9<\ell$.
\end{example}

From Lemma~\ref{L-SigmaM-nonempty} and Example~\ref{Ex2} it is seen that
$\setA_{\ell}(p^{\pm})\neq\emptyset$ if and only if the ``gap'' between the
related quantities $p^{-}_{i}$ and $p^{+}_{i}$ is not ``too small''.

\begin{lemma}\label{L-condperiod}
Let the sets of quantities $p^{-}$ and $p^{+}$ satisfy
\eqref{E-SigmaM-nonempty1}, and also let one of the following conditions be
valid:
\begin{itemize}
\item[(i)] in \eqref{E-SigmaM-nonempty2} one of the inequalities turns to a
    strict equality, that is,
\[
\sum_{i=1}^{r}\lceil p^{-}_{i}\ell\rceil =
\ell\quad\text{or}\quad\sum_{i=1}^{r}\lfloor p^{+}_{i}\ell\rfloor =\ell;
\]
\item[(ii)] in \eqref{E-SigmaM-nonempty2} both of inequalities are strict,
    that is,
\begin{equation}\label{E-SigmaM-nonempty3}
\sum_{i=1}^{r}\lceil p^{-}_{i}\ell\rceil < \ell < \sum_{i=1}^{r}\lfloor p^{+}_{i}\ell\rfloor,
\end{equation}
while in \eqref{E-SigmaM-nonempty1} all inequalities, with the exception
possibly of one, turn to equalities.
\end{itemize}

Then all the sequences from $\setA^{\mathbb{N}}_{\ell}(p^{\pm})$ are
periodic.
\end{lemma}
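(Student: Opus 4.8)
The plan is to reduce everything to a single elementary observation about how the symbol counts in a sliding window of length $\ell$ change when the window advances by one position. For a sequence $\boldsymbol{\alpha}=(\alpha_{0},\alpha_{1},\ldots)\in\setA^{\mathbb{N}}_{\ell}(p^{\pm})$ and each $n\ge0$, let $c_{i}(n)$ denote the number of occurrences of the symbol $i$ in the window $(\alpha_{n},\alpha_{n+1},\ldots,\alpha_{n+\ell-1})$, and write $m_{i}=\lceil p^{-}_{i}\ell\rceil$, $M_{i}=\lfloor p^{+}_{i}\ell\rfloor$. Then membership $\boldsymbol{\alpha}\in\setA^{\mathbb{N}}_{\ell}(p^{\pm})$ together with \eqref{E-quotas} is precisely the assertion that
\[
m_{i}\le c_{i}(n)\le M_{i},\quad i=1,2,\ldots,r,\qquad\text{and}\qquad \sum_{i=1}^{r}c_{i}(n)=\ell,
\]
for every $n\ge0$, the last equality holding simply because each window has length $\ell$.

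The key step is the sliding relation. Advancing the window from position $n$ to position $n+1$ drops the symbol $\alpha_{n}$ and appends the symbol $\alpha_{n+\ell}$, whence
\[
c_{i}(n+1)-c_{i}(n)=\mathbf{1}[\alpha_{n+\ell}=i]-\mathbf{1}[\alpha_{n}=i],\quad i=1,2,\ldots,r.
\]
Consequently, if I can show that under hypothesis (i) or (ii) every count $c_{i}(n)$ is in fact independent of $n$, then the left-hand side vanishes for all $i$ and $n$, forcing $\mathbf{1}[\alpha_{n+\ell}=i]=\mathbf{1}[\alpha_{n}=i]$ for each $i$, i.e.\ $\alpha_{n+\ell}=\alpha_{n}$ for every $n$. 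This says exactly that $\boldsymbol{\alpha}$ is periodic with period $\ell$, which is the desired conclusion. So the whole lemma follows once the counts are shown to be pinned to constant values.

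It remains to verify constancy of the counts in each case. For case (i), if $\sum_{i}m_{i}=\ell$ then combining $c_{i}(n)\ge m_{i}$ with $\sum_{i}c_{i}(n)=\ell=\sum_{i}m_{i}$ shows that no count can exceed its lower bound, so $c_{i}(n)=m_{i}$ for all $i,n$; the subcase $\sum_{i}M_{i}=\ell$ is symmetric and gives $c_{i}(n)=M_{i}$. For case (ii), by hypothesis all of the inequalities $m_{i}\le M_{i}$ in \eqref{E-SigmaM-nonempty1} are equalities except possibly one, say for the index $j$; hence $c_{i}(n)=m_{i}=M_{i}$ is already forced for every $i\ne j$ and every $n$, and the single remaining count is recovered from the total as $c_{j}(n)=\ell-\sum_{i\ne j}m_{i}$, which is likewise independent of $n$. (Were all inequalities in \eqref{E-SigmaM-nonempty1} equalities, then $c_{i}(n)=m_{i}=M_{i}$ for every $i$ would force $\ell=\sum_{i}m_{i}$, contradicting the strict inequality $\sum_{i}m_{i}<\ell$ in \eqref{E-SigmaM-nonempty3}; so this degenerate situation cannot arise.) In both cases all counts are constant, and the sliding relation closes the argument.

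The argument is essentially self-contained, and the only point requiring a little care — the natural place to slip — is case (ii), where the per-symbol bounds leave one index $j$ apparently free; one must notice that the global length constraint $\sum_{i}c_{i}(n)=\ell$ silently fixes that last count once the others are rigid. Once this is observed, the sliding-window identity does all the work uniformly across both cases.
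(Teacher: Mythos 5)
Your proposal is correct and follows essentially the same route as the paper's own proof: in both cases (i) and (ii) the per-window symbol counts are pinned to unique values by the same case analysis, and the conclusion $\alpha_{n+\ell}=\alpha_{n}$ is drawn from comparing consecutive windows of length $\ell$. Your explicit statement of the sliding-window identity merely spells out the step the paper leaves implicit in its final sentence.
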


\begin{proof} Let $\boldsymbol{\alpha}$ be a sequence from
$\setA_{\ell}(p^{\pm})$ (such a sequence exists by
Lemma~\ref{L-SigmaM-nonempty}). Then
\begin{equation}\label{E-sumalphai}
|\boldsymbol{\alpha}|=\sum_{i=1}^{r}|\boldsymbol{\alpha}|_{i}=\ell,
\end{equation}
and inequalities \eqref{E-quotas} imply
\begin{equation}\label{Eineqalphai}
\lceil p^{-}_{i}\ell\rceil \le|\boldsymbol{\alpha}|_{i} \le \lfloor
p^{+}_{i}\ell\rfloor, \quad i=1,2,\ldots,r.
\end{equation}

Now observe that, if in condition~(i) the first equality holds, then
\eqref{E-sumalphai} and \eqref{Eineqalphai} imply
\begin{equation}\label{E-eqforalpha1}
|\boldsymbol{\alpha}|_{i} = \lceil p^{-}_{i}\ell\rceil, \quad i=1,2,\ldots,r,
\end{equation}
whereas, if in condition~(i) the second equality holds, then
\eqref{E-sumalphai} and \eqref{Eineqalphai} imply
\begin{equation}\label{E-eqforalpha2}
|\boldsymbol{\alpha}|_{i} = \lfloor p^{+}_{i}\ell\rfloor, \quad i=1,2,\ldots,r.
\end{equation}

At last, let condition~(ii) be valid. Then in \eqref{E-SigmaM-nonempty1}
$r-1$ inequalities of $r$ are, in fact, equalities. Without loss of
generality one can think that in this case the first $r-1$ of inequalities
\eqref{E-SigmaM-nonempty1} are equalities, and then from \eqref{E-sumalphai}
and \eqref{Eineqalphai} it follows that
\begin{align}\label{E-eqforalpha31}
|\boldsymbol{\alpha}|_{i} &= \lceil p^{-}_{i}\ell\rceil=\lfloor p^{+}_{i}\ell\rfloor, \quad i=1,2,\ldots,r-1,\\
\label{E-eqforalpha32}
|\boldsymbol{\alpha}|_{r} &= \sum_{i=1}^{r-1}|\boldsymbol{\alpha}|_{i}.
\end{align}

So, under any of conditions~(i) or (ii), the quantities
$|\boldsymbol{\alpha}|_{i}$ are defined uniquely by one of relations
\eqref{E-eqforalpha1}--\eqref{E-eqforalpha32} and do not depend on the choice
of $\boldsymbol{\alpha}\in\setA_{\ell}(p^{\pm})$. In this case in each
sequence $\boldsymbol{\alpha}'\in\setA^{\mathbb{N}}_{\ell}(p^{\pm})$, for any
$\ell+1$ successive symbols
$(\alpha'_{k},\ldots,\alpha'_{k+\ell-1},\alpha'_{k+\ell})$, the equality
$\alpha'_{k}=\alpha'_{k+\ell}$ takes place, that is, the sequence
$\boldsymbol{\alpha}'$ is $\ell$-periodic.
\end{proof}

By Lemma~\ref{L-condperiod} aperiodic behavior of sequences from
$\setA^{\mathbb{N}}_{\ell}(p^{\pm})$ may happen only in the case when the
``gap'' between the related quantities $p^{-}_{i}$ and $p^{+}_{i}$ is large
enough to conditions \eqref{E-SigmaM-nonempty1} be valid and in conditions
\eqref{E-SigmaM-nonempty2} both inequalities be strict.

\begin{lemma}\label{L-aperiod-seq}
Let the sets of quantities $p^{-}$ and $p^{+}$ satisfy the system of
inequalities \eqref{E-SigmaM-nonempty1} at least two of which are strict, and
also let condition \eqref{E-SigmaM-nonempty3} hold. Then for each finite
sequence
$\boldsymbol{\alpha}=(\alpha_{0},\alpha_{1},\ldots,\alpha_{\ell-1})\in
\setA_{\ell}(p^{\pm})$ there exist at least two different infinite sequences
$\boldsymbol{\alpha}'=(\alpha'_{n})_{n\ge0}\in
\setA^{\mathbb{N}}_{\ell}(p^{\pm})$ and
$\boldsymbol{\alpha}''=(\alpha''_{n})_{n\ge0}\in
\setA^{\mathbb{N}}_{\ell}(p^{\pm})$ whose initial subsequences of length
$\ell$ coincide with $\boldsymbol{\alpha}$, that is,
$\alpha'_{n}=\alpha''_{n}=\alpha_{n}$ for $n=0,1,\ldots,\ell-1$.
\end{lemma}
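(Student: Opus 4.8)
The plan is to exhibit the two required extensions explicitly, using the periodic extension from Remark~\ref{Rem1} as one of them and a single-symbol perturbation of it as the other. The key preliminary observation is that in the $\ell$-periodic extension $\boldsymbol{\alpha}'$ of $\boldsymbol{\alpha}$ (given by $\alpha'_{n}=\alpha_{n\bmod\ell}$) every sliding block of length $\ell$ is a cyclic rotation of $\boldsymbol{\alpha}$ and hence carries the \emph{same} occurrence counts $|\boldsymbol{\alpha}|_{i}$ for each symbol $i$; in particular $\boldsymbol{\alpha}'\in\setA^{\mathbb{N}}_{\ell}(p^{\pm})$. Writing $a_{i}=\lceil p^{-}_{i}\ell\rceil$ and $b_{i}=\lfloor p^{+}_{i}\ell\rfloor$, validity of a single length-$\ell$ block is equivalent, by \eqref{E-quotas} together with integrality of the counts, to its count vector lying in the box $a_{i}\le(\cdot)_{i}\le b_{i}$. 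This reduces the whole problem to a statement about count vectors: if I modify $\boldsymbol{\alpha}'$ at one position, only the (at most $\ell$) blocks covering that position change their counts, and each such block changes by removing the old symbol and inserting the new one.

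The combinatorial heart of the argument --- and the step I expect to be the main obstacle --- is to locate two \emph{distinct} symbols $j\neq s$ that leave room to maneuver, namely with
\[
|\boldsymbol{\alpha}|_{j}>a_{j}\quad\text{and}\quad|\boldsymbol{\alpha}|_{s}<b_{s}.
\]
The first strict inequality for \emph{some} $j$ follows from $\sum_{i}a_{i}<\ell=\sum_{i}|\boldsymbol{\alpha}|_{i}$, and the second for \emph{some} $s$ from $\sum_{i}|\boldsymbol{\alpha}|_{i}=\ell<\sum_{i}b_{i}$; both are precisely the strict inequalities \eqref{E-SigmaM-nonempty3}. The only obstruction to choosing $j$ and $s$ distinct is that the sets $J=\{i:|\boldsymbol{\alpha}|_{i}>a_{i}\}$ and $S=\{i:|\boldsymbol{\alpha}|_{i}<b_{i}\}$ might both equal one and the same singleton $\{i_{0}\}$. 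But $J=S=\{i_{0}\}$ forces $a_{i}=|\boldsymbol{\alpha}|_{i}=b_{i}$ for every $i\neq i_{0}$, i.e.\ equality in \eqref{E-SigmaM-nonempty1} for all indices except possibly $i_{0}$; this contradicts the hypothesis that at least two of the inequalities \eqref{E-SigmaM-nonempty1} are strict. Hence a distinct pair $j\neq s$ as above always exists.

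With such a pair in hand I build $\boldsymbol{\alpha}''$. Since $|\boldsymbol{\alpha}|_{j}>a_{j}\ge0$ gives $|\boldsymbol{\alpha}|_{j}\ge1$, the symbol $j$ occurs in $\boldsymbol{\alpha}$, say $\alpha_{k_{0}}=j$ with $0\le k_{0}\le\ell-1$. I define $\boldsymbol{\alpha}''$ to coincide with the periodic extension $\boldsymbol{\alpha}'$ everywhere except at the single position $k_{0}+\ell$, where I set $\alpha''_{k_{0}+\ell}=s$ in place of $\alpha'_{k_{0}+\ell}=\alpha_{k_{0}}=j$. Every sliding block of $\boldsymbol{\alpha}''$ that does not cover position $k_{0}+\ell$ is unchanged and therefore valid, while every block that does cover it has count vector equal to the common value $(|\boldsymbol{\alpha}|_{i})$ with one subtracted from coordinate $j$ and one added to coordinate $s$; by the two strict inequalities above this altered vector still lies in the box, so the block is valid. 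Thus $\boldsymbol{\alpha}''\in\setA^{\mathbb{N}}_{\ell}(p^{\pm})$. Finally, the modified position satisfies $k_{0}+\ell\ge\ell$, so both $\boldsymbol{\alpha}'$ and $\boldsymbol{\alpha}''$ agree with $\boldsymbol{\alpha}$ on the initial block $\alpha_{0},\ldots,\alpha_{\ell-1}$, while they differ at position $k_{0}+\ell$ (value $j$ versus $s\neq j$); hence they furnish two distinct sequences with the required initial segment, which completes the proof.
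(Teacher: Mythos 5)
Your proof is correct and follows essentially the same route as the paper's: both arguments locate a symbol with slack above its lower bound and a distinct symbol with slack below its upper bound, and realize the second extension by swapping a single occurrence of the former for the latter at a position $\ge\ell$ of the periodic extension. If anything, you are more careful at the one delicate point — you explicitly use the hypothesis that at least two of the inequalities \eqref{E-SigmaM-nonempty1} are strict to show the two slack symbols $j\neq s$ can be chosen \emph{distinct}, whereas the paper asserts the existence of $i'$ and $i''$ without spelling out why they may be taken different.
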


\begin{proof}
Denote by $I:=\{i_{1},\ldots, i_{m}\}$ the set of all $i\in\{1,2,\ldots,r\}$
for which the inequalities $\lceil p^{-}_{i}\ell\rceil \le \lfloor
p^{+}_{i}\ell\rfloor$ in \eqref{E-SigmaM-nonempty1} are strict, that is,
\begin{align*}
\lceil p^{-}_{i}\ell\rceil &< \lfloor p^{+}_{i}\ell\rfloor,\quad i=i_{1},\ldots, i_{m},\\
\lceil p^{-}_{i}\ell\rceil &= \lfloor p^{+}_{i}\ell\rfloor,\quad i\neq i_{1},\ldots, i_{m}.
\end{align*}
Then by the lemma conditions $m\ge2$.

Let $\boldsymbol{\alpha}=(\alpha_{0},\alpha_{1},\ldots,\alpha_{\ell-1})$ be a
sequence from $\setA_{\ell}(p^{\pm})$; such a sequence exists by
Lemma~\ref{L-SigmaM-nonempty}. Observe that there exist $i',i''\in I$ such
that
\begin{equation}\label{E-i-ii}
\lceil p^{-}_{i'}\ell\rceil < |\boldsymbol{\alpha}|_{i'}\le\lfloor p^{+}_{i'}\ell\rfloor,\quad
\lceil p^{-}_{i''}\ell\rceil \le |\boldsymbol{\alpha}|_{i''}<\lfloor p^{+}_{i''}\ell\rfloor.
\end{equation}
Indeed, if for all $i\in I$ the equality $|\boldsymbol{\alpha}|_{i}=\lceil
p^{-}_{i}\ell\rceil$ was valid, then the equality $\sum_{i=1}^{r}\lceil
p^{-}_{i}\ell\rceil = |\boldsymbol{\alpha}|=\ell$ would also be valid, which
contradicts to \eqref{E-SigmaM-nonempty3}. And if for all $i\in I$ the
equality $|\boldsymbol{\alpha}|_{i}=\lfloor p^{+}_{i}\ell\rfloor$ was valid,
then the equality  $\sum_{i=1}^{r}\lfloor p^{+}_{i}\ell\rfloor =
|\boldsymbol{\alpha}|=\ell$ would be valid, which also contradicts to
\eqref{E-SigmaM-nonempty3}.

Now observe that without loss of generality we can suppose that the first
symbol in
$\boldsymbol{\alpha}=(\alpha_{0},\alpha_{1},\ldots,\alpha_{\ell-1})$
coincides with $i'$, that is, $\alpha_{0}=i'$. Indeed, in the opposite case
we can extend the sequence $\boldsymbol{\alpha}$ to the right by periodicity
with period $\ell$, and instead of $\boldsymbol{\alpha}$ take such an
interval $(\alpha_{k},\alpha_{1},\ldots,\alpha_{k+\ell-1})$ of the extended
sequence for which the equality $\alpha_{k}=i'$ holds .

So, let $\alpha_{0}=i'$. Then construct two sequences
\[
(\alpha'_{0},\alpha'_{1},\ldots,\alpha'_{\ell-1},\alpha'_{\ell}),\quad
(\alpha''_{0},\alpha''_{1},\ldots,\alpha''_{\ell-1},\alpha''_{\ell})
\]
whose initial intervals of length $\ell$ coincide with $\boldsymbol{\alpha}$
while the members with the indices $\ell$ are as follows: $\alpha'_{\ell}=i'$
and $\alpha''_{\ell}=i''$. In virtue of \eqref{E-i-ii}, both variants of
extension of the sequence $\boldsymbol{\alpha}$ lead to that the sequences
$(\alpha'_{1},\ldots,\alpha'_{\ell-1},\alpha'_{\ell})$ and
$(\alpha''_{1},\ldots,\alpha''_{\ell-1},\alpha''_{\ell})$ are distinct and
belong to $\setA_{\ell}(p^{\pm})$. By extending them to the right to infinite
sequences from $\setA^{\mathbb{N}}_{\ell}(p^{\pm})$ (for example, by
$\ell$-periodicity), we obtain the required sequences $\boldsymbol{\alpha}',
\boldsymbol{\alpha}''$.
\end{proof}

\begin{corollary}\label{Cor1}
Under the conditions of Lemma~\ref{L-aperiod-seq} the set
$\setA^{\mathbb{N}}_{\ell}(p^{\pm})$ contains infinitely many different
aperiodic sequences.
\end{corollary}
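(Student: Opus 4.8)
The plan is to show that $\setA^{\mathbb{N}}_{\ell}(p^{\pm})$ is in fact uncountable, and then to discard the periodic sequences by a counting argument. First I would set up a branching (binary tree) construction whose nodes are admissible finite words, i.e.\ finite sequences all of whose subwords of length $\ell$ lie in $\setA_{\ell}(p^{\pm})$, and each of whose infinite branches is a sequence in $\setA^{\mathbb{N}}_{\ell}(p^{\pm})$. The key is to arrange that every node has two children, so that the set of branches has the cardinality of the continuum. The root is any word of length $\ell$ from $\setA_{\ell}(p^{\pm})$, which is nonempty under the present hypotheses by Lemma~\ref{L-SigmaM-nonempty}.

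The engine of the construction is an upgrade of Lemma~\ref{L-aperiod-seq}: whereas that lemma produces two distinct admissible continuations of a \emph{length-$\ell$} word, I need two distinct admissible continuations of an \emph{arbitrary} admissible finite word $\boldsymbol{\beta}$. Here I would exploit that admissibility is a purely local condition — membership in $\setA^{\mathbb{N}}_{\ell}(p^{\pm})$ is governed solely by the $\ell$ consecutive symbols of each window. Concretely, let $w$ be the last $\ell$ symbols of $\boldsymbol{\beta}$, so $w\in\setA_{\ell}(p^{\pm})$, and let $\boldsymbol{\sigma}$ be any sequence in $\setA^{\mathbb{N}}_{\ell}(p^{\pm})$ whose initial block of length $\ell$ equals $w$. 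Then the sequence obtained by appending to $\boldsymbol{\beta}$ the tail $(\sigma_{\ell},\sigma_{\ell+1},\ldots)$ again lies in $\setA^{\mathbb{N}}_{\ell}(p^{\pm})$: every length-$\ell$ window of the glued sequence either lies entirely inside $\boldsymbol{\beta}$ or coincides with a length-$\ell$ window of $\boldsymbol{\sigma}$, because the last $\ell-1$ symbols of $\boldsymbol{\beta}$ are exactly $\sigma_{1},\ldots,\sigma_{\ell-1}$; in either case it satisfies the constraints \eqref{E-quotas}.

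With this gluing device in hand, I would apply Lemma~\ref{L-aperiod-seq} to $w$ to obtain two \emph{distinct} sequences $\boldsymbol{\sigma}',\boldsymbol{\sigma}''\in\setA^{\mathbb{N}}_{\ell}(p^{\pm})$ having $w$ as their common length-$\ell$ prefix; being distinct but agreeing on the first $\ell$ symbols, they first differ at some index $j\ge\ell$. Gluing their tails onto $\boldsymbol{\beta}$ yields two admissible infinite continuations of $\boldsymbol{\beta}$ that agree along $\boldsymbol{\beta}$ and first differ beyond it; truncating at that first point of difference produces two admissible finite words that properly extend $\boldsymbol{\beta}$, have equal length, and differ in their last symbol. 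These are the two children of $\boldsymbol{\beta}$. Iterating from the root yields a full binary tree of admissible words, whence $\setA^{\mathbb{N}}_{\ell}(p^{\pm})$ has at least $2^{\aleph_{0}}$ elements. Since a periodic sequence over the finite alphabet $\setA$ is determined by its period and by one period block, there are only countably many periodic sequences; removing them still leaves uncountably many, hence infinitely many, aperiodic sequences in $\setA^{\mathbb{N}}_{\ell}(p^{\pm})$.

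The step I expect to be the crux is the iteration, that is, promoting the ``one-shot'' branching of Lemma~\ref{L-aperiod-seq} to a branching available at \emph{every} node. The difficulty is that the proof of Lemma~\ref{L-aperiod-seq} created its two continuations only after a preliminary cyclic shift bringing to the front a symbol $i'$ whose count strictly exceeds its lower quota; after appending one symbol this favorable position need not persist, so a naive single-symbol branching can stall. Routing the argument through the locality/gluing observation above sidesteps this issue, because it lets me reuse Lemma~\ref{L-aperiod-seq} verbatim at each node while only ever reasoning about the fixed length-$\ell$ window at the current tip of the word.
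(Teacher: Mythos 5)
Your proof is correct, and it establishes more than the corollary asks for (uncountability rather than mere infinitude). The paper itself offers no proof of Corollary~\ref{Cor1} --- it is stated as an immediate consequence of Lemma~\ref{L-aperiod-seq} --- so the comparison is really between your fully worked argument and the argument the author leaves implicit. What you supply that the paper does not is precisely the step you flag as the crux: Lemma~\ref{L-aperiod-seq} only guarantees two continuations of a \emph{length-$\ell$} word, and its proof branches only after a preliminary cyclic shift, so the two sequences it produces need not diverge at position $\ell$ and a naive symbol-by-symbol iteration can indeed stall at a node where some symbol count sits exactly at its lower quota $\lceil p^{-}_{i}\ell\rceil$. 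Your gluing observation --- that membership in $\setA^{\mathbb{N}}_{\ell}(p^{\pm})$ is determined window-by-window, so a tail of any admissible sequence sharing the last $\ell$-block of $\boldsymbol{\beta}$ can be spliced onto $\boldsymbol{\beta}$ --- is exactly the right repair, and your verification (every length-$\ell$ window of the spliced sequence is either a window of $\boldsymbol{\beta}$ or a window of $\boldsymbol{\sigma}$) is sound. The remaining steps check out: the two children of each node are proper extensions of equal length differing in their last symbol, so distinct branches of the resulting full binary tree yield distinct elements of $\setA^{\mathbb{N}}_{\ell}(p^{\pm})$, giving continuum many; and the periodic (even eventually periodic) sequences over a finite alphabet form a countable set, so uncountably many aperiodic sequences remain. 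In short, your route buys a rigorous and slightly stronger conclusion at the cost of one auxiliary locality lemma that the paper tacitly assumes.
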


Denote by $\setA^{*}_{\ell}(p^{\pm})$ the set of all finite sequences each of
which is an initial interval of a sequence from
$\setA^{\mathbb{N}}_{\ell}(p^{\pm})$. Clearly, the set
$\setA^{*}_{\ell}(p^{\pm})$ consists either of sequences of length lesser
than $\ell$ allowing extension to the right to sequences from
$\setA_{\ell}(p^{\pm})$ or of sequences of length greater than or equal to
$\ell$ each finite subsequence of which of length $\ell$ belongs to
$\setA_{\ell}(p^{\pm})$.

\begin{lemma}\label{L-inv+subadd}
The following assertions are valid:
\begin{itemize}
  \item[(i)] If $\boldsymbol{\alpha}=(\alpha_{n})_{n=0}^{\infty}\in
      \setA^{\mathbb{N}}_{\ell}(p^{\pm})$, then
      $\boldsymbol{\alpha}_{m}=(\alpha_{n+m})_{n=0}^{\infty}\in
      \setA^{\mathbb{N}}_{\ell}(p^{\pm})$ for any $m\ge0$.
  \item[(ii)] If $\boldsymbol{\alpha}=(\alpha_{0},\ldots,\alpha_{k})\in
      \setA^{*}_{\ell}(p^{\pm})$, then for any $m=1,2,\ldots,k$ the
      inclusions $\boldsymbol{\alpha}'=(\alpha_{0},\ldots,\alpha_{m-1})\in
      \setA^{*}_{\ell}(p^{\pm})$ and
      $\boldsymbol{\alpha}''=(\alpha_{m},\ldots,\alpha_{k})\in
      \setA^{*}_{\ell}(p^{\pm})$ hold.
\end{itemize}
\end{lemma}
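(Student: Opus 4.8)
The plan is to unpack the two definitions directly and to obtain part~(ii) from part~(i). For part~(i) the key observation is that shifting a sequence merely discards some of its length-$\ell$ windows and leaves all the others unchanged. Concretely, for $\boldsymbol{\alpha}=(\alpha_{n})\in\setA^{\mathbb{N}}_{\ell}(p^{\pm})$ and any $m\ge0$, the window of $\boldsymbol{\alpha}_{m}=(\alpha_{n+m})$ beginning at position $j$ is $(\alpha_{m+j},\ldots,\alpha_{m+j+\ell-1})$, which is exactly the window of $\boldsymbol{\alpha}$ beginning at position $m+j$. Since by hypothesis every length-$\ell$ window of $\boldsymbol{\alpha}$ satisfies the frequency constraints \eqref{E-quotas}, so does every length-$\ell$ window of $\boldsymbol{\alpha}_{m}$; hence $\boldsymbol{\alpha}_{m}\in\setA^{\mathbb{N}}_{\ell}(p^{\pm})$.

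For part~(ii) I would first fix, by the very definition of $\setA^{*}_{\ell}(p^{\pm})$, an infinite sequence $\boldsymbol{\beta}=(\beta_{n})\in\setA^{\mathbb{N}}_{\ell}(p^{\pm})$ whose initial interval of length $k+1$ coincides with $\boldsymbol{\alpha}$, so that $\beta_{n}=\alpha_{n}$ for $n=0,1,\ldots,k$. The prefix $\boldsymbol{\alpha}'=(\alpha_{0},\ldots,\alpha_{m-1})$ is then plainly an initial interval of the same $\boldsymbol{\beta}$, and therefore lies in $\setA^{*}_{\ell}(p^{\pm})$ with no further argument.

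For the suffix $\boldsymbol{\alpha}''=(\alpha_{m},\ldots,\alpha_{k})$ I would invoke part~(i): the shifted sequence $\boldsymbol{\beta}_{m}=(\beta_{n+m})_{n\ge0}$ again belongs to $\setA^{\mathbb{N}}_{\ell}(p^{\pm})$, and its initial interval of length $k-m+1$ is precisely $(\beta_{m},\ldots,\beta_{k})=(\alpha_{m},\ldots,\alpha_{k})=\boldsymbol{\alpha}''$. Thus $\boldsymbol{\alpha}''$ is an initial interval of a sequence from $\setA^{\mathbb{N}}_{\ell}(p^{\pm})$, that is, $\boldsymbol{\alpha}''\in\setA^{*}_{\ell}(p^{\pm})$.

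The argument is essentially bookkeeping, and I do not anticipate a genuine obstacle. The only point that needs care is conceptual rather than technical: one must pass through the \emph{infinite} witness $\boldsymbol{\beta}$ rather than try to argue about $\boldsymbol{\alpha}''$ in isolation, since $\setA^{*}_{\ell}(p^{\pm})$ is defined via initial intervals of infinite sequences. Once the shift-invariance of part~(i) is in hand, the suffix case reduces to the prefix case for the shifted witness, so the two halves of part~(ii) are handled uniformly and no separate discussion of the length regimes $<\ell$ and $\ge\ell$ is required.
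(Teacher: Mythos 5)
Your proof is correct; the paper simply declares the assertions of this lemma obvious and omits any proof, and your argument supplies exactly the intended bookkeeping (the length-$\ell$ windows of the shifted sequence form a subset of those of the original, and the suffix case of (ii) reduces to the prefix case applied to the shifted infinite witness via (i)). The one point you flag --- that one must route the argument through an infinite witness $\boldsymbol{\beta}$ because $\setA^{*}_{\ell}(p^{\pm})$ is defined via initial intervals of sequences in $\setA^{\mathbb{N}}_{\ell}(p^{\pm})$ --- is indeed the only place where care is needed, and you handle it properly.
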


The assertions of Lemma~\ref{L-inv+subadd} are obvious and so their proofs
are omitted. Assertion~(i) is equivalent to the \emph{invariance of the set
$\setA^{\mathbb{N}}_{\ell}(p^{\pm})$ with respect to the left shifts}, that
is, to the fact that together with each sequence
$\boldsymbol{\alpha}=(\alpha_{n})_{n\in\mathbb{N}}$ the set
$\setA^{\mathbb{N}}_{\ell}(p^{\pm})$ contains also the sequence
$\boldsymbol{\alpha}'=(\alpha'_{n})_{n\in\mathbb{N}}$ defined by the
equalities $\alpha'_{n}=\alpha_{n+1}$ for $n\in\mathbb{N}$. Assertion~(i)
implies that, in definition of the set $\setA^{*}_{\ell}(p^{\pm})$, one could
consider all finite intervals (not only the initial ones) of the sequences
from $\setA^{\mathbb{N}}_{\ell}(p^{\pm})$. The property expressed by
assertion~(ii) will be referred to as the \emph{sub-additivity of the set
$\setA^{*}_{\ell}(p^{\pm})$}.

\begin{remark}\label{R-biblio}\rm
The author failed to find in the literature any explicit references to the
symbolic sequences with the allowed $\ell$-blocks of type
$\setA_{\ell}(p^{\pm})$. Nevertheless, symbolic sequences with similar
properties arise in various applications and theoretical studies. The
symbolic sequences with the allowed $\ell$-blocks of type
$\setA_{\ell}(p^{\pm})$  may be treated as the ``constrained sequences''
arising in the theory of the so-called constrained noiseless channels
\cite[Ch.~17]{MacKay03}. The frequency properties of such sequences with
specific allowed forbidden patterns of symbols was studied, e.g., in
\cite{ChoiSpz:ISIT01}, and in \cite{MOS:GTS99,MOS:IEEETIT01} the theory of
joint spectral radius was used for these goals. Conceptually close are also
the so-called $(d,k)$-constrained sequences arising in the method of RLL
coding (runlength-limited coding) used for data processing in mass data
storage systems such as hard drives, compact discs and so on, see, e.g.,
\cite{Kitchens98,Immink04}.

The symbolic sequences with the allowed $\ell$-blocks of type
$\setA_{\ell}(p^{\pm})$ are similar to the so-called $k$-balanced sequences
\cite{Kitchens98,LindMarcus95}, although the former constitute a broader
class. The symbolic sequences with the allowed $\ell$-blocks of type
$\setA_{\ell}(p^{\pm})$ may have no limiting frequencies for some symbols
which differentiate them from the symbolic sequences whose relative
frequencies are uniformly convergent \cite{FerMont:CANT10}.
\end{remark}

\subsection{Subshifts of finite type}\label{SS-shifts}

In this section we show that the set of infinite sequences
$\setA^{\mathbb{N}}_{\ell}(p^{\pm})$ can be naturally treated as the
so-called \emph{subshifts of finite type} (or \emph{$\ell$-step topological
Markov chains}). Recall the necessary definitions following
to~\cite{KatokHas:e,LindMarcus95}.

As usual the operator $\sigma: \boldsymbol{\alpha}\mapsto
\boldsymbol{\alpha}'$ transferring a sequence
$\boldsymbol{\alpha}=(\alpha_{n})_{n\in\mathbb{N}}\in\setA^{\mathbb{N}}$ to
the sequence
$\boldsymbol{\alpha}'=(\alpha'_{n})_{n\in\mathbb{N}}\in\setA^{\mathbb{N}}$
defined by the equalities $\alpha'_{n}=\alpha_{n+1}$ for $n\in\mathbb{N}$ is
called the \emph{left shift} or simply \emph{shift} on $\setA^{\mathbb{N}}$.
Given a square matrix $\omega=(\omega_{ij})_{i,j=1}^{r}$ of order $r$ with
the elements from the set $\{0,1\}$, we define
\[
\setA^{\mathbb{N}}_{\omega}:=\{\boldsymbol{\alpha}=(\alpha_{n})\in \setA^{\mathbb{N}}:
\omega_{\alpha_{n}\alpha_{n+1}}=1,~n\in\mathbb{N}\}.
\]
In other words, the matrix $\omega$ determines all transitions between the
symbols of the alphabet $\setA=\{1,2,\ldots,r\}$ in the sequences from
$\setA^{\mathbb{N}}_{\omega}$. The restriction of the shift operator $\sigma$
to $\setA^{\mathbb{N}}_{\omega}$ is called the \emph{topological Markov
chain} defined by the matrix $\omega$ of allowed
transitions~\cite{KatokHas:e,LindMarcus95}. In this case $\sigma$ is also
often referred to as a \emph{subshift of finite type}\footnote{Sometimes the
terms topological Markov chain or subshift of finite type are applied not to
the shift operator $\sigma$ but to the set of the sequences
$\setA^{\mathbb{N}}_{\varOmega}$.}.

There is a natural class of symbolic systems more general than the
topological Markov chains. Given a map $\varOmega: \setA^{\ell+1}\to
\{0,1\}$, define
\[
\setA^{\mathbb{N}}_{\varOmega}:=\{\boldsymbol{\alpha}=(\alpha_{n})\in \setA^{\mathbb{N}}:
\varOmega(\alpha_{n},\ldots,\alpha_{n+\ell})=1,~n\in\mathbb{N}\}.
\]
The restriction of the shift map $\sigma$ to $\setA^{\mathbb{N}}_{\varOmega}$
is called the \emph{$\ell$-step topological Markov chain} defined by the
function $\varOmega$ of allowed transitions.

From the point of view of dynamics, the $\ell$-step topological Markov chains
are the same as the usual ($1$-step) topological Markov chains since the
former can be described as the topological Markov chains with the alphabet
$\setA=\{1,2,\ldots,r\}^{\ell}$ and the transition matrix $\omega$ such that
\begin{equation}\label{E-transmat}
\omega_{(\alpha_{1},\ldots,\alpha_{\ell}),(\alpha'_{1},\ldots,\alpha'_{\ell})}=1,
\end{equation}
if $\alpha'_{k}=\alpha_{k+1}$ for $k=1,\ldots,\ell-1$ and
$\varOmega(\alpha_{1},\ldots,\alpha_{\ell},\alpha'_{\ell})=1$, see, e.g.,
\cite[Sect.~1.9]{KatokHas:e}.

\begin{theorem}\label{T-Markov}
Let, for some sets of quantities $p^{-}=(p^{-}_{1},\ldots,p^{-}_{r})$ and
$p^{+}=(p^{+}_{1},\ldots,p^{+}_{r})$ satisfying \eqref{E-freq-cond}, the
conditions of Lemma~\ref{L-SigmaM-nonempty} hold. The the restriction of the
shift $\sigma$ to the set $\setA^{\mathbb{N}}_{\ell}(p^{\pm})$ is an
$\ell$-step topological Markov chain.
\end{theorem}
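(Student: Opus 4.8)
The plan is to exhibit a single admissibility function $\varOmega:\setA^{\ell+1}\to\{0,1\}$ whose associated $\ell$-step topological Markov chain coincides with $\setA^{\mathbb{N}}_{\ell}(p^{\pm})$. The guiding observation is that membership in $\setA^{\mathbb{N}}_{\ell}(p^{\pm})$ is a purely \emph{local} constraint: a sequence belongs to this set exactly when every one of its sliding windows of $\ell$ consecutive symbols lies in $\setA_{\ell}(p^{\pm})$. Since each such window is determined by $\ell$, hence by at most $\ell+1$, successive entries, this is precisely the kind of finite-memory restriction that defines a subshift of finite type.

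First I would define $\varOmega$ by declaring $\varOmega(\beta_{0},\beta_{1},\ldots,\beta_{\ell})=1$ if and only if $(\beta_{0},\beta_{1},\ldots,\beta_{\ell-1})\in\setA_{\ell}(p^{\pm})$, so that $\varOmega$ inspects the first $\ell$ of its $\ell+1$ arguments and simply ignores the last one. This is a legitimate map on $\setA^{\ell+1}$, and the conditions of Lemma~\ref{L-SigmaM-nonempty} guarantee $\setA_{\ell}(p^{\pm})\neq\emptyset$, so that $\varOmega$ is not identically zero and, by Remark~\ref{Rem1}, the resulting Markov chain is nonempty.

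Next I would verify the set-theoretic identity $\setA^{\mathbb{N}}_{\varOmega}=\setA^{\mathbb{N}}_{\ell}(p^{\pm})$ by comparing the two defining conditions directly. By definition, $\boldsymbol{\alpha}=(\alpha_{n})\in\setA^{\mathbb{N}}_{\varOmega}$ if and only if $\varOmega(\alpha_{n},\ldots,\alpha_{n+\ell})=1$ for every $n\in\mathbb{N}$, which by the choice of $\varOmega$ is equivalent to $(\alpha_{n},\ldots,\alpha_{n+\ell-1})\in\setA_{\ell}(p^{\pm})$ for every $n\in\mathbb{N}$. As $n$ ranges over $\mathbb{N}$, the blocks $(\alpha_{n},\ldots,\alpha_{n+\ell-1})$ range over exactly the sliding $\ell$-blocks of $\boldsymbol{\alpha}$ starting at every position, so this condition asserts precisely that every length-$\ell$ window of $\boldsymbol{\alpha}$ satisfies the frequency constraints \eqref{E-quotas}, which is the defining property of $\setA^{\mathbb{N}}_{\ell}(p^{\pm})$. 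Hence the restriction of $\sigma$ to $\setA^{\mathbb{N}}_{\ell}(p^{\pm})$ is the $\ell$-step topological Markov chain determined by $\varOmega$.

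There is essentially no hard step here; as the author remarks, the statement is obvious once the local character of the constraint is made explicit. The only point demanding a little care is the bookkeeping of indices: $\varOmega$ must be made to depend on the \emph{first} $\ell$ entries of its $(\ell+1)$-symbol argument rather than the last $\ell$, since otherwise the initial window $(\alpha_{0},\ldots,\alpha_{\ell-1})$ would never be tested at $n=0$ and the identification would fail. With this matched indexing, every sliding $\ell$-block is accounted for, and the two sets coincide.
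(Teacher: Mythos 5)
Your proof is correct and follows essentially the same route as the paper: both exhibit an explicit admissibility function $\varOmega:\setA^{\ell+1}\to\{0,1\}$ and check that $\setA^{\mathbb{N}}_{\varOmega}=\setA^{\mathbb{N}}_{\ell}(p^{\pm})$. The only (immaterial) difference is that the paper's $\varOmega$ tests both length-$\ell$ windows of the $(\ell+1)$-block while yours tests only the first; since the condition is imposed at every $n$, the two choices define the same set, and your indexing caveat about using the first window rather than the last is well taken.
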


\begin{proof}
It suffices to note that
$\setA^{\mathbb{N}}_{\ell}(p^{\pm})=\setA^{\mathbb{N}}_{\varOmega}$, where
the map $\varOmega: \setA^{\ell+1}\to \{0,1\}$ is such that
$\varOmega(\alpha_{n},\ldots,\alpha_{n+\ell})=1$ if and only if
$(\alpha_{n},\ldots,\alpha_{n+\ell-1}),
(\alpha_{n+1},\ldots,\alpha_{n+\ell})\in\setA_{\ell}(p^{\pm})$, and
$\varOmega(\alpha_{n},\ldots,\alpha_{n+\ell})=0$ in the opposite case.
\end{proof}

Despite its evidence, Theorem~\ref{T-Markov} is of principal importance since
it allows to treat the symbolic sequences with constraints on the sliding
block relative frequencies of symbols as the ($\ell$-step) topological Markov
chains.

\section{Matrix products with constraints on the sliding block
relative frequencies of factors}\label{S-MatProd}

In this section we strengthen the results of \cite{Dai:JFI14,Koz:LAA14} on
the joint/generalized spectral radius for the Markovian matrix products for
the matrix products \eqref{E-switchsys} whose index sequences
$\boldsymbol{\alpha}=(\alpha_{n})$ are subjected to constraints on the
sliding block relative frequencies of symbols.

\subsection{The joint and generalized spectral radii}\label{S-JGSR}
Recall basic concepts of the theory of joint/generalized spectral radius.
Given a sub-multi\-plicative norm\footnote{A norm $\|\cdot\|$ on a space of
linear operators is called sub-multiplicative if $\|AB\|\le\|A\|\cdot\|B\|$
for any operators $A$ and $B$.} $\|\cdot\|$ on $\mathbb{K}^{d\times d}$, the
limit
\begin{equation}\label{E-JSRad}
\rho({\setM}):=
\limsup_{n\to\infty}\rho_{n}({\setM})\qquad \left(~ =
\lim_{n\to\infty}\rho_{n}({\setM}) =
\inf_{n\ge1}\rho_{n}({\setM})\right),
\end{equation}
where
\[
\rho_{n}({\setM}):=
\sup\left\{\|M_{\alpha_{n}}\cdots M_{\alpha_{1}}\|^{1/n}:~\alpha_{j}\in \setA\right\},
\]
is called the \emph{joint spectral radius} of the set of matrices $\setM$
\cite{RotaStr:IM60}. This limit always exists, finite and does not depend on
the norm $\|\cdot\|$. If $\setM$ consists of a single matrix, then
(\ref{E-JSRad}) turns into the known Gelfand formula for the spectral radius
of a linear operator. By this reason sometimes (\ref{E-JSRad}) is called the
generalized Gelfand formula \cite{ShihWP:LAA97}.

The \emph{generalized spectral radius} of the set of matrices $\setM$ is the
quantity defined by a similar to \eqref{E-JSRad} formula in which instead of
the norm it is taken the spectral radius $\rho(\cdot)$ of the corresponding
matrices \cite{DaubLag:LAA92,DaubLag:LAA01}:
\begin{equation}\label{E-GSRad}
\hat{\rho}({\setM}):=
\limsup_{n\to\infty}\hat{\rho}_{n}({\setM})\qquad \left(~ = \sup_{n\ge1}\hat{\rho}_{n}({\setM})\right),
\end{equation}
where
\[
\hat{\rho}_{n}({\setM}):=\sup\left\{\rho(M_{\alpha_{n}}\cdots M_{\alpha_{1}})^{1/n}:~\alpha_{j}\in \setA\right\}.
\]

As has been proved by M.~Berger and Y.~Wang~\cite{BerWang:LAA92} the
quantities $\rho({\setM})$ and $\hat{\rho}({\setM})$ coincide with each other
for bounded sets of matrices $\setM$:
\begin{equation}\label{E-bergWang}
\hat{\rho}({\setM})=\rho({\setM}).
\end{equation}
This formula has numerous applications in the theory of joint/generalized
spectral radius. It implies the continuous dependence of the
joint/ge\-neral\-ized spectral radius on $\setM$. From \eqref{E-bergWang} it
also follows that the quantities $\hat{\rho}_{n}({\setM})$ and
$\rho_{n}({\setM})$, for any $n$, form the lower and upper bounds
respectively for the joint/generalized spectral radius of $\setM$:
\begin{equation}\label{E-GSR-JSR}
\hat{\rho}_{n}({\setM})\le
\hat{\rho}({\setM})=\rho({\setM})\le
\rho_{n}({\setM}),
\end{equation}
which is often used to evaluate the accuracy of computation of the
joint/ge\-neral\-ized spectral radius.

\subsection{The Markovian joint and generalized spectral radii}\label{S-MarkovJSR}
The distinctive feature of the definitions \eqref{E-JSRad} and
\eqref{E-GSRad} is that the products of matrices $M_{\alpha_{n}}\cdots
M_{\alpha_{1}}$ in them correspond to all possible sequences of indices
$\boldsymbol{\alpha}=(\alpha_{1},\ldots,\alpha_{n})$. A more complicated
situation is when these matrix products are somehow constrained, for example,
some combinations of matrices in them are forbidden. One of situations of the
kind was investigated in \cite{Dai:JFI14,Koz:LAA14}, where the concepts of
the Markovian joint and generalized spectral radii were introduced to analyze
the matrix products with constraints of the Markovian type on the neighboring
matrices. Another situation of the kind is described in what follows.

Let $\omega=(\omega_{ij})$ be an $(r\times r)$-matrix with the elements from
the set $\{0,1\}$. A finite sequence $(\alpha_{1},\ldots,\alpha_{n})$ with
the elements from $\setA$ will be called \emph{$\omega$-admissible} if
$\omega_{\alpha_{j+1}\alpha_{j}}=1$ for all $1\le j \le n-1$ and besides
there exists $\alpha_{*}\in\setA$ such that
$\omega_{\alpha_{*}\alpha_{n}}=1$. Denote by $W_{r,\omega}$ the set of all
$\omega$-admissible finite sequences. The matrix products
$M_{\alpha_{n}}\cdots M_{\alpha_{1}}$ corresponding to the
$\omega$-admissible sequences $(\alpha_{1},\ldots,\alpha_{n})$ will be called
\emph{Markovian} since such products of matrices arise naturally in the
theory of matrix cocycles over the topological Markov chains, see, e.g.,
\cite{KatokHas:e,Kitchens98}.

Define analogs of formulae \eqref{E-JSRad} and \eqref{E-GSRad} for the
$\omega$-admissible products of matrices. The limit
\begin{equation}\label{E-JSRadM}
\rho({\setM},\omega):=
\limsup_{n\to\infty}\rho_{n}({\setM},\omega),
\end{equation}
where
\[
\rho_{n}({\setM},\omega):=\sup\left\{\|M_{\alpha_{n}}\cdots M_{\alpha_{1}}\|^{1/n}:~
(\alpha_{1},\ldots,\alpha_{n})\in W_{r,\omega}\right\},
\]
is called the \emph{Markovian joint spectral radius} of the set of matrices
$\setM$ defined by the \emph{matrix of admissible transitions $\omega$}. If,
for some $n$, the set of all $\omega$-admissible sequences
$(\alpha_{1},\ldots,\alpha_{n})$ is empty then we put
$\rho_{n}({\setM},\omega)=0$. In this case, the sets of all
$\omega$-admissible sequences $(\alpha_{1},\ldots,\alpha_{k})$ will be also
empty for each $k\ge n$, and therefore $\rho({\setM},\omega)=0$. The question
on existence of arbitrarily long $\omega$-admissible sequences can be
answered algorithmically in a finite number of steps. In particular, the set
$W_{r,\omega}$ has arbitrarily long sequences if each column of the matrix
$\omega$ contains at least one nonzero element.

Likewise to formula \eqref{E-JSRad}, the limit \eqref{E-JSRadM} always
exists, finite and does not depend on the norm $\|\cdot\|$. Moreover, by the
Fekete Lemma~\cite{Fekete:MZ23} (see also \cite[Ch.~3, Sect.~1]{PolyaSezgoI})
the sub-multiplicativity in $n$ of the quantity
$\rho_{n}^{n}({\setM},\omega)$ implies the existence of
$\lim_{n\to\infty}\rho_{n}({\setA},\omega)$ and of
$\inf_{n\ge1}\rho_{n}({\setA},\omega)$ and their equality to the limit
\eqref{E-JSRadM}:
\[
\rho({\setM},\omega):=
\limsup_{n\to\infty}\rho_{n}({\setM},\omega)=\lim_{n\to\infty}\rho_{n}({\setM},\omega)=
\inf_{n\ge1}\rho_{n}({\setM},\omega).
\]

The quantity
\begin{equation}\label{E-GSRadM} \hat{\rho}({\setM},\omega):=
\limsup_{n\to\infty}\hat{\rho}_{n}({\setM},\omega),
\end{equation}
where
\[
\hat{\rho}_{n}({\setM},\omega):=\sup\left\{\rho(M_{\alpha_{n}}\cdots M_{\alpha_{1}})^{1/n}:~
(\alpha_{1},\ldots,\alpha_{n})\in W_{r,\omega}\right\},
\]
is called the \emph{Markovian generalized spectral radius} of the set of
matrices $\setM$ defined by the matrix of admissible transitions $\omega$.
Here again we are putting $\hat{\rho}_{n}({\setM},\omega)=0$ if the set of
$\omega$-admissible sequences of indices $(\alpha_{1},\ldots,\alpha_{n})$ is
empty. Like in the case of formula \eqref{E-GSRad}, the limit
\eqref{E-GSRadM} coincides with $\sup_{n\ge1}\hat{\rho}_{n}({\setM},\omega)$.

For the Markovian products of matrices there are valid inequalities
\begin{equation}\label{E-MGSR-MJSR}
\hat{\rho}_{n}({\setM},\omega)\le
\hat{\rho}({\setM},\omega)\le\rho({\setM},\omega)\le
\rho_{n}({\setM},\omega),
\end{equation}
similar to \eqref{E-GSR-JSR}. However the question whether there is a valid
equality
\begin{equation}\label{E-bergWangMark}
\hat{\rho}({\setM},\omega)=\rho({\setM},\omega),
\end{equation}
similar to the Berger--Wang equality~\eqref{E-bergWang}, becomes more
complicated. To answer it we will need one more definitions.

An $\omega$-admissible finite sequence $(\alpha_{1},\ldots,\alpha_{n})$ will
be called \emph{periodically extendable} if
$\omega_{\alpha_{1}\alpha_{n}}=1$. Not every $\omega$-admissible finite
sequence can be periodically extended. However, if arbitrarily long
$\omega$-admissible sequences exist, then periodically extendable
$\omega$-admissible sequences exist too. The set of all periodically
extendable $\omega$-admissible sequences is denoted by
$W^{(\text{per})}_{r,\omega}$.

Define the quantity
\[
\hat{\rho}^{(\text{per})}_{n}({\setM},\omega):=\sup\left\{\rho(M_{\alpha_{n}}\cdots M_{\alpha_{1}})^{1/n}:~
(\alpha_{1},\ldots,\alpha_{n})\in W^{(\text{per})}_{r,\omega}\right\},
\]
and set\footnote{Like in the definitions of the Markovian joint and
generalized spectral radii we put
$\hat{\rho}^{(\text{per})}_{n}({\setM},\omega)=0$ if the set of all the
periodically extendable sequences of length $n$ is empty.}
\[
\hat{\rho}^{(\text{per})}({\setM},\omega):=
\limsup_{n\to\infty}\hat{\rho}^{(\text{per})}_{n}({\setM},\omega).
\]

\begin{daitheorem}(See~\cite{Dai:JFI14})
$\hat{\rho}^{(\text{per})}({\setM},\omega)=\rho({\setM},\omega)$.
\end{daitheorem}

Since $W^{(\text{per})}_{r,\omega}\subseteq W_{r,\omega}$ then
$\hat{\rho}^{(\text{per})}_{n}({\setM},\omega)\le
\hat{\rho}_{n}({\setM},\omega)$ for each $n\ge 1$, and therefore
$\hat{\rho}^{(\text{per})}({\setM},\omega)\le \hat{\rho}({\setM},\omega)$.
This last inequality together with \eqref{E-MGSR-MJSR} by Dai's Theorem then
implies the Markovian analog~\eqref{E-bergWangMark} of the Berger--Wang
formula~\eqref{E-bergWang}.

\subsection{The constrained joint and generalized spectral radii}\label{S-ConstrainedJSR}

Let, likewise in the previous section, $\setM=\{M_{1},M_{2},\ldots,M_{r}\}$
be a finite set of matrices with the elements from the field
$\mathbb{K}=\mathbb{R},\mathbb{C}$ of real or complex numbers. Let also
$\ell$ be a natural number and $p_{i}$, $p^{\pm}_{i}$, $i=1,2,\ldots,r$, be
sets of quantities satisfying \eqref{E-freq-cond}.

A finite sequence $\boldsymbol{\alpha}=(\alpha_{1},\ldots,\alpha_{n})$ with
the elements from $\setA$ will be referred to as
\emph{$\setA_{\ell}(p^{\pm})$-admissible} if
$\boldsymbol{\alpha}\in\setA^{*}_{\ell}(p^{\pm})$, that is, if each its
subsequence $(\alpha_{j},\ldots,\alpha_{j+\ell-1})$ of length $\ell$ belongs
to $\setA_{\ell}(p^{\pm})$, and each its subsequence of length lesser than
$\ell$ allows extension to the right to a sequence from
$\setA_{\ell}(p^{\pm})$. Denote by $W_{\setA_{\ell}(p^{\pm})}$ the set of all
finite $\setA_{\ell}(p^{\pm})$-admissible sequences
$\boldsymbol{\alpha}=(\alpha_{1},\ldots,\alpha_{n})$. The problem on
existence of arbitrarily long $\setA_{\ell}(p^{\pm})$-admissible sequences
has been resolved in Lemmata~\ref{L-SigmaM-nonempty}--\ref{L-inv+subadd}.
Products of matrices $M_{\alpha_{n}}\cdots M_{\alpha_{1}}$ corresponding to
the $\setA_{\ell}(p^{\pm})$-admissible sequences
$(\alpha_{1},\ldots,\alpha_{n})$ will be referred to as
\emph{$\setA_{\ell}(p^{\pm})$-admissible}.

Now, the concepts of joint and generalized spectral radii for
$\setA_{\ell}(p^{\pm})$-admissible products of matrices from $\setM$ can be
defined by almost literal repetition of the related definitions from the
previous section. The limit
\begin{equation}\label{E-JSRadConstrained}
\rho({\setM},\setA_{\ell}(p^{\pm})):=
\limsup_{n\to\infty}\rho_{n}({\setM},\setA_{\ell}(p^{\pm})),
\end{equation}
where
\[
\rho_{n}({\setM},\setA_{\ell}(p^{\pm})):=\sup\left\{\|M_{\alpha_{n}}\cdots M_{\alpha_{1}}\|^{1/n}:~
(\alpha_{1},\ldots,\alpha_{n})\in W_{\setA_{\ell}(p^{\pm})}\right\},
\]
is called the \emph{$\setA_{\ell}(p^{\pm})$-constrained joint spectral
radius} of the set of matrices $\setM$.

Like in the cases of formulae \eqref{E-JSRad} or \eqref{E-JSRadM}, the limit
\eqref{E-JSRadConstrained} always exists, finite and does not depend on the
norm $\|\cdot\|$. Moreover, from assertion (ii) of Lemma~\ref{L-inv+subadd}
it follows that the sets $\setA^{*}_{\ell}(p^{\pm})$ possess the property of
sub-additivity and then the quantity
$\rho_{n}^{n}({\setM},\setA_{\ell}(p^{\pm}))$ is sub-multiplicative in $n$.
Therefore, by the already mentioned Fekete Lemma there exists
$\lim_{n\to\infty}\rho_{n}({\setM},\setA_{\ell}(p^{\pm}))$ coinciding with
$\rho({\setM},\setA_{\ell}(p^{\pm}))$ as well with
$\inf_{n\ge1}\rho_{n}({\setM},\setA_{\ell}(p^{\pm}))$. This means that the
$\setA_{\ell}(p^{\pm})$-constrained joint spectral radius may be defined by
each of the following equalities:
\[
\rho({\setM},\setA_{\ell}(p^{\pm})):=
\limsup_{n\to\infty}\rho_{n}({\setM},\setA_{\ell}(p^{\pm}))=\lim_{n\to\infty}\rho_{n}({\setM},\setA_{\ell}(p^{\pm}))=
\inf_{n\ge1}\rho_{n}({\setM},\setA_{\ell}(p^{\pm})).
\]

Similarly, the quantity
\[
\hat{\rho}({\setM},\setA_{\ell}(p^{\pm})):=
\limsup_{n\to\infty}\hat{\rho}_{n}({\setM},\setA_{\ell}(p^{\pm})),
\]
where
\[
\hat{\rho}_{n}({\setM},\setA_{\ell}(p^{\pm})):=\sup\left\{\rho(M_{\alpha_{n}}\cdots M_{\alpha_{1}})^{1/n}:~
(\alpha_{1},\ldots,\alpha_{n})\in W_{\setA_{\ell}(p^{\pm})}\right\},
\]
can be called the \emph{$\setA_{\ell}(p^{\pm})$-constrained generalized
spectral radius} of the set of matrices $\setM$.

For the $\setA_{\ell}(p^{\pm})$-admissible products of matrices the
inequalities
\begin{equation}\label{E-CGSR-CJSR}
\hat{\rho}_{n}({\setM},\setA_{\ell}(p^{\pm}))\le
\hat{\rho}({\setM},\setA_{\ell}(p^{\pm}))\le\rho({\setM},\setA_{\ell}(p^{\pm}))\le
\rho_{n}({\setM},\setA_{\ell}(p^{\pm})),
\end{equation}
similar to \eqref{E-GSR-JSR} or \eqref{E-MGSR-MJSR}, hold. However the
question about the validity of the equality
\begin{equation}\label{E-BergWangConstrained}
\hat{\rho}({\setM},\setA_{\ell}(p^{\pm}))=\rho({\setM},\setA_{\ell}(p^{\pm})),
\end{equation}
analogous to the Berger--Wang equality \eqref{E-bergWang}, like in the cases
of the Markovian joint and generalized spectral radii, is not so evident. Let
us call an $\setA_{\ell}(p^{\pm})$-admissible finite sequence
$(\alpha_{0},\ldots,\alpha_{n-1})$ \emph{periodically extendable} if it may
be extended to the right to an $n$-periodic sequence belonging to
$\setA^{\mathbb{N}}_{\ell}(p^{\pm})$. The set of all periodically extendable
$\setA_{\ell}(p^{\pm})$-admissible sequences will be denoted by
$W^{(\text{per})}_{\setA_{\ell}(p^{\pm})}$. Clearly, this set is nonempty
provided that $\setA_{\ell}(p^{\pm})\neq\emptyset$ because each sequence from
$\setA_{\ell}(p^{\pm})$ admits an $\ell$-periodic extension to the right.

Define, for each $n\ge 1$, the quantity
\[
\hat{\rho}^{(\text{per})}_{n}({\setM},\setA_{\ell}(p^{\pm})):=\sup\left\{\rho(M_{\alpha_{n}}\cdots M_{\alpha_{1}})^{1/n}:~
(\alpha_{1},\ldots,\alpha_{n})\in W^{(\text{per})}_{\setA_{\ell}(p^{\pm})}\right\},
\]
and set
\[
\hat{\rho}^{(\text{per})}({\setM},\setA_{\ell}(p^{\pm})):=
\limsup_{n\to\infty}\hat{\rho}^{(\text{per})}_{n}({\setM},\setA_{\ell}(p^{\pm})).
\]
Then the following generalization of the Berger--Wang formula for the case of
matrix products with constraints on the sliding block relative frequencies of
factors or, what is the same, for the case of
$\setA_{\ell}(p^{\pm})$-admissible matrix products is valid.

\begin{theorem}\label{T-BWgen}
$\hat{\rho}^{(\text{per})}({\setM},\setA_{\ell}(p^{\pm}))=\rho({\setM},\setA_{\ell}(p^{\pm}))$.
\end{theorem}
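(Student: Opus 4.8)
The plan is to reduce the statement to \emph{Dai's Theorem} by realizing the constrained system as an ordinary one-step Markovian system over an enlarged alphabet, and then transporting the resulting equality back to the original matrices. First I would invoke Theorem~\ref{T-Markov}: the shift on $\setA^{\mathbb{N}}_{\ell}(p^{\pm})$ is an $\ell$-step topological Markov chain, hence by the standard recoding \eqref{E-transmat} (see \cite[Sect.~1.9]{KatokHas:e}) it is conjugate to a one-step topological Markov chain over the finite block alphabet $\widetilde{\setA}:=\setA_{\ell}(p^{\pm})$ with a transition matrix $\omega$, where $\omega_{b'b}=1$ precisely when the block $b$ may be followed by the block $b'$ (the last $\ell-1$ symbols of $b$ coincide with the first $\ell-1$ symbols of $b'$, both blocks lying in $\setA_{\ell}(p^{\pm})$). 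To each block $b=(\beta_{1},\ldots,\beta_{\ell})\in\widetilde{\setA}$ I assign the matrix $\widetilde{M}_{b}:=M_{\beta_{1}}$, obtaining a finite family $\widetilde{\setM}=\{\widetilde{M}_{b}\}$ indexed by $\widetilde{\setA}$, to which Dai's Theorem applies verbatim.

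The core of the argument is that this recoding matches the matrix products exactly. If $(b_{1},\ldots,b_{n})$ is an $\omega$-admissible block path, its blocks read off a word $(\alpha_{1},\ldots,\alpha_{n+\ell-1})\in\setA^{*}_{\ell}(p^{\pm})$ with $b_{j}=(\alpha_{j},\ldots,\alpha_{j+\ell-1})$, so that $\widetilde{M}_{b_{n}}\cdots\widetilde{M}_{b_{1}}=M_{\alpha_{n}}\cdots M_{\alpha_{1}}$; conversely, every $(\alpha_{1},\ldots,\alpha_{n})\in W_{\setA_{\ell}(p^{\pm})}$ is the sequence of first symbols of such a path, because a word in $\setA^{*}_{\ell}(p^{\pm})$ extends to the right by definition, while Lemma~\ref{L-inv+subadd} keeps its prefixes inside $\setA^{*}_{\ell}(p^{\pm})$. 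Consequently the \emph{sets} of admissible products of length $n$ in the two settings coincide (the $\ell-1$ unused tail symbols of a block path never enter the product), which gives $\rho_{n}(\widetilde{\setM},\omega)=\rho_{n}(\setM,\setA_{\ell}(p^{\pm}))$ and $\hat{\rho}_{n}(\widetilde{\setM},\omega)=\hat{\rho}_{n}(\setM,\setA_{\ell}(p^{\pm}))$ for every $n$, hence equality of the corresponding joint and generalized spectral radii after passage to the limit. The same identification restricted to cycles yields $\hat{\rho}^{(\text{per})}_{n}(\widetilde{\setM},\omega)=\hat{\rho}^{(\text{per})}_{n}(\setM,\setA_{\ell}(p^{\pm}))$: an $\omega$-admissible word with $\omega_{b_{1}b_{n}}=1$ closes into a block cycle, which is exactly an $n$-periodic point of $\setA^{\mathbb{N}}_{\ell}(p^{\pm})$, and its period-product is again $M_{\alpha_{n}}\cdots M_{\alpha_{1}}$.

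With these identities in hand the theorem follows at once: Dai's Theorem applied to $(\widetilde{\setM},\omega)$ gives $\hat{\rho}^{(\text{per})}(\widetilde{\setM},\omega)=\rho(\widetilde{\setM},\omega)$, and the two identifications above then force $\hat{\rho}^{(\text{per})}(\setM,\setA_{\ell}(p^{\pm}))=\rho(\setM,\setA_{\ell}(p^{\pm}))$. The degenerate case $\setA_{\ell}(p^{\pm})=\emptyset$ is trivial, both radii being $0$. I expect the main obstacle to be verifying that the two notions of \emph{periodic extendability} correspond correctly under the recoding, and that the choice $\widetilde{M}_{b}=M_{\beta_{1}}$ reproduces the period-products without an end-effect: concretely, one must check that the cyclic-closure constraint $\omega_{b_{1}b_{n}}=1$ is equivalent to the overlap conditions $\alpha_{n+k}=\alpha_{k}$, $k=1,\ldots,\ell-1$, that define the $n$-periodic extension lying in $\setA^{\mathbb{N}}_{\ell}(p^{\pm})$, and conversely. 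The remaining verifications—that first-symbol words exhaust $W_{\setA_{\ell}(p^{\pm})}$ and that the continuability built into $W_{\setA_{\ell}(p^{\pm})}=\setA^{*}_{\ell}(p^{\pm})$ matches the continuability built into the Markovian admissibility—are routine consequences of the shift-invariance and sub-additivity recorded in Lemma~\ref{L-inv+subadd}.
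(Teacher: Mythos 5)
Your proof is correct and follows essentially the same route as the paper's: invoke Theorem~\ref{T-Markov}, recode the $\ell$-step topological Markov chain as a $1$-step chain over the block alphabet via \eqref{E-transmat}, and apply Dai's Theorem. The paper's own proof is just three sentences and leaves implicit exactly the details you spell out (the assignment $\widetilde{M}_{b}=M_{\beta_{1}}$, the matching of products, and the correspondence of the two notions of periodic extendability), all of which you verify correctly.
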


\begin{proof}
By Theorem~\ref{T-Markov} the restriction of the shift $\sigma$ on the set
$\setA^{\mathbb{N}}_{\ell}(p^{\pm})$ is an $\ell$-step topological Markov
chain. But, as was mentioned in Section~\ref{SS-shifts}, the $\ell$-step
topological Markov chains are the same as the usual ($1$-step) topological
Markov chains with the alphabet $\setA=\{1,2,\ldots,r\}^{\ell}$ and
appropriate matrices of allowed transitions \eqref{E-transmat}. Then the
claim of theorem follows from Dai's Theorem.
\end{proof}

Since $W^{(\text{per})}_{\setA_{\ell}(p^{\pm})}\subseteq
W_{\setA_{\ell}(p^{\pm})}$ then
$\hat{\rho}^{(\text{per})}_{n}({\setM},\setA_{\ell}(p^{\pm}))\le
\hat{\rho}_{n}({\setM},\setA_{\ell}(p^{\pm}))$ for each $n\ge 1$, and
therefore $\hat{\rho}^{(\text{per})}({\setM},\setA_{\ell}(p^{\pm}))\le
\hat{\rho}({\setM},\setA_{\ell}(p^{\pm}))$. From here and from
\eqref{E-CGSR-CJSR}, by Theorem~\ref{T-BWgen}, then follows the analog
\eqref{E-BergWangConstrained} of the Berger--Wang formula \eqref{E-bergWang}
for the matrix products with constraints on the sliding block relative
frequencies of factors.

\begin{remark}\rm
In this section the admissible sequences were defined as follows. It was
given the set $\setP=\setA_{\ell}(p^{\pm})$ of the sequences of length
$\ell$, and a finite sequence was treated admissible in two cases: either
when its length was less than $\ell$ and the sequence had a right-extension
to a sequence from $\setP$ or when its length was greater than or equal to
$\ell$ and each its subsequence of length $\ell$ belonged to $\setP$.

Under such a treatment of admissible sequences specific type of the set
$\setP$ is not important. Therefore all the constructions of this section
retain their validity for an arbitrary set $\setP$ and accordingly defined
admissible sequences, cf. \cite{ChoiSpz:ISIT01}. Of course, under such an
approach the question about non-emptiness of the set of all admissible
sequences should be addressed separately. The Markov sequences described in
Section~\ref{S-MarkovJSR} keep within this, wider scheme of definition of
admissible sequences, in particular.
\end{remark}

\section*{Acknowledgments}
The author is grateful to Eug\`{e}ne Asarin and Grigory Kabatiansky for the
valuable comments on the concept of symbolic sequences with frequency
constraints, and also to Xiongping Dai for fruitful discussions of a problem
of the joint and generalized spectral radii for matrix products with the
Markovian or frequency constraints on factors.

\bibliographystyle{elsarticle-num}
\bibliography{ConstrainedJSRv4}
\end{document}